\let\orgdescriptionlabel\descriptionlabel
\renewcommand*{\descriptionlabel}[1]{%
  \let\orglabel\label
  \let\label\@gobble
  \phantomsection
  \edef\@currentlabel{#1\unskip}%
  \let\label\orglabel
  \orgdescriptionlabel{#1}%
}
\title[Nonlinearly Elastic Maps]{Nonlinearly Elastic Maps: Energy Minimizing Configurations of Membranes on Prescribed Surfaces}
\author{Timothy J.~Healey}
\address{Department of Mathematics, Cornell University}
\email{tjh10@cornell.edu}
\author{Gokul G.~Nair}
\address{Center for Applied Mathematics, Cornell University}
\email{gn234@cornell.edu}
\keywords{Nonlinear elasticity, polyconvexity, energy minimization, global invertibility}
\subjclass{74B20, 74G65, 35D30, 74K15}
\begin{document}

\begin{abstract}
We propose a model for nonlinearly elastic membranes undergoing finite deformations while confined to a regular frictionless surface in $\RR^3$. This is a physically correct model of the analogy sometimes given to motivate harmonic maps between manifolds. The proposed energy density function is convex in the strain pair comprising the deformation gradient and the local area ratio. If the target surface is a plane, the problem reduces to 2-dimensional, polyconvex nonlinear elasticity addressed by J.M. Ball. On the other hand, the energy density is not rank-one convex for unconstrained deformations into $\RR^3$. We show that the problem admits an energy-minimizing configuration when constrained to lie on the given surface. For a class of Dirichlet problems, we demonstrate that the minimizing deformation is a homeomorphism onto its image on the given surface and establish the weak Eulerian form of the equilibrium equations.  
\end{abstract}

\maketitle

\section{Introduction}\label{sec:intro}
A harmonic map between two manifolds is typically defined as a critical point of an associated Dirichlet energy. An analogy is sometimes made with the placement of an elastic membrane onto a rigid surface~\cite{eells1978report}. Existence of solutions via the direct method of the calculus of variations is a common approach, e.g.,~\cite{lin2008analysis}. While the Dirichlet energy is clearly not appropriate as a model for nonlinear elasticity (e.g.,~\cite{antman2005problems,ciarlet1988mathematical}), we explore that overall strategy here in a physically correct setting. We postulate a model for a two-dimensional nonlinearly elastic membrane undergoing finite deformations constrained to lie on a smooth rigid surface in $\RR^3$.  
     
Mathematical models for bulk nonlinear elasticity, i.e., involving finite deformations of domains in $\RR^n$ into $\RR^n$, are well known, e.g.,~\cite{antman2005problems,ball1976convexity,ball2002some,ciarlet1988mathematical}. The same cannot be said for two-dimensional membranes deforming in $\RR^3$. Here we propose  
an energy density that is convex in the pair comprising the deformation gradient and the local area ratio. For planar deformations in $\RR^2$, this is the same as two-dimensional polyconvex elasticity as introduced by Ball~\cite{ball1976convexity}; the existence of an energy minimizing configuration in bulk nonlinear elasticity was first proven in that celebrated work. However, for unconstrained mappings into $\RR^3$, membrane energy functions that include ours are not rank-one convex, cf. Appendix. Thus, our proposed energy density is clearly not polyconvex, which would imply rank-one convexity, cf.~\cite{dacorogna2007direct}; see also Remark 1 in Section 2. In any case, energy minimization for the unconstrained membrane problem is not well posed.

\sloppy
In the context of wrinkling models for finitely stretched sheets, our proposed constitutive hypothesis plays an important role in establishing the existence of an energy minimizing configuration~\cite{healey2023existence}. A specific version featuring the same properties has also been employed successfully as part of the model in a numerical-bifurcation setting~\cite{li2016stability}. More generally, it is an important ingredient in models for nonlinearly elastic shells undergoing finite deformations~\cite{healey2023energy}. Each of these also incorporate bending energy, which compensates for the lack of convexity in the membrane part. The prescribed surface in this work plays a similar role, and we prove the existence of an energy-minimizing configuration. We view this as another confirmation of our constitutive hypothesis.

\fussy
We mention that rigorous derivations of two-dimensional membrane energies via dimension reduction from three-dimensional nonlinear elasticity are well known~\cite{anza2008nonlinear,conti2006derivation,le1995nonlinear}. Obtained by $\Gamma$-convergence, the resulting energy functional yields a well-posed minimization problem for an unconstrained membrane in $\RR^3$. In particular, the associated stored energy density is quasiconvex. However, any such model necessarily exemplifies tension-field theory, i.e., only non-compressive configurations are attainable, cf.~\cite{pipkin1994relaxed}. In particular, this rules out the analysis of wrinkling patterns, which arise from competition between nonzero compression and bending, e.g.,~\cite{li2016stability}.
     
The outline of the work is as follows. We formulate the surface-constrained problem in Section~\ref{sec:problem-formulation} and introduce the basic hypotheses for the membrane model. Aside from the features already discussed above, we assume that the energy density function grows unbounded as the local area ratio approaches zero. As in bulk nonlinear elasticity, this condition prevents local interpenetration of matter. The $C^1$ target surface is presumed oriented, and a crucial ingredient of our analysis is the ability to express the oriented local area ratio employing the unit normal field on the prescribed surface. As a consequence, the weak convergence of the oriented local area ratio is obtained in Section~\ref{sec:existence}, and we are able to prove the existence of an energy-minimizing configuration.  As is the case in bulk nonlinear elasticity~\cite{ball2002some}, it is unclear how to take a rigorous first variation at a minimizer, which would lead to the weak form of the Euler-Lagrange equations. This is a consequence of the above-mentioned blow-up of the energy density as the local area ratio approaches zero.
 
In Section~\ref{sec:global-invertibility}, we specialize to a class of Dirichlet problems. Inspired by the methods of~\cite{ball1981global,ball2002some} for bulk elasticity, we first show that an energy-minimizing deformation is a homeomorphism onto its image. With the benefit of an additional, physically reasonable constitutive hypothesis similar to that proposed in~\cite{ball2002some}, we then demonstrate in Section~\ref{sec:weak-form} that 
the weak form of the spatial (Eulerian) equilibrium equations are satisfied. The geometry of the target surface must be accounted for throughout our analysis, and we adapt the appropriate theorems from~\cite{ball1981global,ball2002some} accordingly. In particular, our arguments are facilitated by the use of mixed local coordinates, common in inverse and semi-inverse problems of 
nonlinear elasticity, e.g.,~\cite{doyle1956nonlinear}. Naturally, our problem falls into the latter category. We make some final remarks in Section~\ref{sec:concluding-remarks}. Among other things, we verify our convexity hypothesis for a class of isotropic membrane energies. 

\section{Problem Formulation}\label{sec:problem-formulation}
Let $\Omega\subset\RR^2$ be an open bounded domain with a locally Lipschitz boundary~\cite{adams2003sobolev} $\partial\Omega$, and let $\mathcal N\subset\RR^3$ be a regular, oriented surface~\cite{do2016differential} without boundary. Let $\bm n:\mathcal N\rightarrow \SS^2$ denote a continuous unit-normal field. A configuration is specified by a mapping $\bm f:\bar\Omega\rightarrow\mathcal N$. We denote the gradient of $\bm f$ at $x\in\Omega$ by $\bm F(x):=\nabla\bm f(x)$. We require the local orientation condition
\begin{align}\label{eqn:local-orientation}
    J[\bm f]:=\bm n(\bm f)\cdot(\bm f_{,1}\times\bm f_{,2})=\abs{\bm f_{,1}\times\bm f_{,2}}>0\text{ in }\bar\Omega,
\end{align}
where $\bm f_{,\alpha}$, $\alpha=1,2$ denote partial derivatives and $\bm a\times \bm b$ is the right-handed cross product in $\RR^3$.

We assume the existence of a stored-energy function, $W:\bar\Omega\times\RR^{3\times 2}_+\rightarrow[0,\infty)$, where
\begin{align*}
    \RR^{3\times2}_+:=\left\{\bm F\in\RR^{3\times2}:\det(\bm F^T\bm F)>0\right\}.
\end{align*}
$W(x,\cdot)$ is also assumed to satisfy material objectivity, viz., 
\begin{align}\label{eq:material-objectivity}
    W(x,\bm Q\bm F)=W(x,\bm F) \text{ for all }\bm Q\in SO(3).
\end{align}

We further require $W$ to satisfy the following hypotheses:
\smallskip

\begin{description}
    \item[(H1)\label{itm:growth1}] For $p>4/3$, $q>1$ there exist constants $C_1>0$ and $C_2$ such that
    \begin{equation*}
        W(x,\bm F)\geq C_1\left\{\abs{\bm F}^p +\abs{J}^q\right\} +C_2.
    \end{equation*}
    
    \item[(H2)\label{itm:polyconvexity}] There is a function $\Phi:\bar\Omega\times\RR^{3\times 2}\times(0,\infty)\rightarrow[0,\infty)$, such that $\Phi(\cdot,\bm F,J)$ is measurable for all $(\bm F,J)\in\RR^{3\times 2}\times(0,\infty)$,
    \begin{align*}
        (\bm F,J)\mapsto \Phi(x,\bm F,J)\text{ is convex for almost all }x\in\Omega,
    \end{align*}
    and $W(x,\bm F)\equiv\Phi(x,\bm F,J[\bm f])$.
    
    \item[(H3)\label{itm:growth2}] $\Phi\rightarrow +\infty$ as $J\rightarrow0^+.$
\end{description}

\begin{remark}
    Hypothesis~\ref{itm:polyconvexity} is not the same as polyconvexity. The latter entails a convex function of the four arguments $(\bm F,m_1,m_2,m_2)$, where $\bm F\in \RR^{3\times2}$ and $m_j$, $j=1,2,3$ denote the three independent $2\times2$ sub-determinants of $\bm F$ (not all zero),  cf.~\cite[p.~157]{dacorogna2007direct},~\cite{healey2023energy}.
\end{remark}

Let $L^p(\Omega,\RR^3)$ denote the usual space of $L^p$-integrable 3-vector valued functions on $\Omega$ and let $W^{1,p}(\Omega,\RR^3)$ denote the Sobolev space of vector fields whose weak partial derivatives are also $L^p$-integrable. The norms on these spaces are defined by
\begin{align*}
    &\norm{\bm f}^p_{L^p(\Omega,\RR^3)}=\int_\Omega\abs{\bm f}^p\dif x,\\
    &\norm{\bm f}^p_{W^{1,p}(\Omega,\RR^3)}=\norm{\bm f}^p_{L^p(\Omega,\RR^3)}+\int_\Omega\abs{\nabla\bm f}^p\dif x.
\end{align*}

The Sobolev space of maps from $\Omega$ to $\mathcal N$ is defined by
\begin{align*}
    W^{1,p}(\Omega,\mathcal N)=\left\{\bm u\in W^{1,p}(\Omega,\RR^3):\bm u(x)\in\mathcal N\text{ for a.e.~}x\in\Omega\right\}.
\end{align*}
Note that $W^{1,p}(\Omega,\mathcal N)$ is a weakly closed subset of $W^{1,p}(\Omega,\RR^3)$. Let $\Gamma\subset\partial\Omega$ with 1-dimensional Hausdorff measure, $\abs{\Gamma}_{\partial\Omega}>0$. For a given $\bm f_o\in W^{1,p}(\Omega,\mathcal N)$, we define the admissible set
\begin{align}
\label{eq:admissible-set}
    \mathcal{A}:=\left\{\bm f\in W^{1,p}(\Omega,\mathcal N):J\in L^q(\Omega);\, J>0\text{ a.e.~in }\Omega;\, \bm f|_{\Gamma}=\bm f_o|_{\Gamma}\right\},
\end{align}
where the boundary prescription is understood in the trace sense and $J:=J[\bm f]$.

The total energy of a configuration is given by
\begin{align}\label{eqn:energy}
    E[\bm f] = \int_\Omega W(x,\nabla\bm f(x))\dif x. 
\end{align}

\section{Existence of Energy Minimizers}\label{sec:existence}
Our main existence result is the following:
\begin{theorem}\label{thm:existence}
    Suppose that $\mathcal A$ is non-empty with $\inf_{\mathcal A}E[\bm f]<\infty$. Then there exists $\bm f^*\in\mathcal A$ such that $E[\bm f^*]=\inf_{\mathcal A}E[\bm f]$.
\end{theorem}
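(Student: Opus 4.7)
The plan is to apply the direct method of the calculus of variations. Select a minimizing sequence $\{\bm f_k\}\subset\mathcal A$ with $E[\bm f_k]\to\inf_{\mathcal A}E<\infty$. Hypothesis~\ref{itm:growth1} yields uniform bounds on $\|\nabla\bm f_k\|_{L^p}$ and $\|J_k\|_{L^q}$, where $J_k:=J[\bm f_k]$. Since $\bm f_k-\bm f_o$ has vanishing trace on $\Gamma$ and $|\Gamma|_{\partial\Omega}>0$, a Poincar\'e inequality bounds $\|\bm f_k\|_{L^p}$. After extracting a subsequence (not relabeled), one has $\bm f_k\rightharpoonup\bm f^*$ in $W^{1,p}(\Omega,\RR^3)$, $J_k\rightharpoonup\chi$ in $L^q(\Omega)$ for some $\chi$, strong convergence $\bm f_k\to\bm f^*$ in $L^r$ for every $r<p^*=2p/(2-p)$ (every finite $r$ if $p\geq 2$) by Rellich--Kondrachov, and a.e.\ convergence along a further subsequence. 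Closedness of $\mathcal N$ in $\RR^3$ gives $\bm f^*\in\mathcal N$ a.e., while weak continuity of the trace gives $\bm f^*|_\Gamma=\bm f_o|_\Gamma$.

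The crux of the argument is identifying $\chi=J[\bm f^*]$. Each component of $\bm f_{k,1}\times\bm f_{k,2}$ is a $2\times 2$ subdeterminant of $\nabla\bm f_k$, and admits the distributional-divergence form
\[
\partial_1 f_k^i\,\partial_2 f_k^j-\partial_2 f_k^i\,\partial_1 f_k^j=\partial_1\bigl(f_k^i\,\partial_2 f_k^j\bigr)-\partial_2\bigl(f_k^i\,\partial_1 f_k^j\bigr).
\]
The threshold $p>4/3$ is precisely what makes the weak--strong product $f_k^i\,\partial_\alpha f_k^j\rightharpoonup f^{*i}\,\partial_\alpha f^{*j}$ valid in some $L^s$ with $s>1$, whence $\bm f_{k,1}\times\bm f_{k,2}\to\bm f^*_{,1}\times\bm f^*_{,2}$ in $\mathcal D'(\Omega)$. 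Since $|\bm f_{k,1}\times\bm f_{k,2}|=J_k$ (by~\eqref{eqn:local-orientation}) is bounded in $L^q$, the convergence upgrades to weak $L^q$. As $\bm n$ is continuous and bounded, dominated convergence gives $\bm n(\bm f_k)\to\bm n(\bm f^*)$ strongly in every $L^r$, $r<\infty$, so a weak--strong pairing yields
\[
J_k=\bm n(\bm f_k)\cdot(\bm f_{k,1}\times\bm f_{k,2})\rightharpoonup\bm n(\bm f^*)\cdot(\bm f^*_{,1}\times\bm f^*_{,2})=J[\bm f^*]
\]
weakly in $L^1$, forcing $\chi=J[\bm f^*]$.

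Having $(\nabla\bm f_k,J_k)\rightharpoonup(\nabla\bm f^*,J[\bm f^*])$ weakly in $L^p\times L^q$, hypothesis~\ref{itm:polyconvexity} and $\Phi\geq 0$ permit the classical lower semicontinuity theorem for convex integrands (De~Giorgi, Ioffe) to conclude
\[
E[\bm f^*]=\int_\Omega\Phi\bigl(x,\nabla\bm f^*,J[\bm f^*]\bigr)\,\dif x\leq\liminf_{k\to\infty}\int_\Omega\Phi(x,\nabla\bm f_k,J_k)\,\dif x=\inf_{\mathcal A}E.
\]
The remaining admissibility condition is $J[\bm f^*]>0$ a.e.: were $J[\bm f^*]$ to vanish on a set of positive measure, hypothesis~\ref{itm:growth2} would force the integrand to diverge there, contradicting finiteness of $E[\bm f^*]$. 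Thus $\bm f^*\in\mathcal A$ and realizes the infimum. The main obstacle in this plan is the second paragraph: combining the null-Lagrangian structure of the $2\times 2$ minors with the nonlinear superposition $\bm n\circ\bm f_k$ to obtain weak convergence of the oriented area element $J_k$, a step that critically uses the exponent threshold $p>4/3$ built into~\ref{itm:growth1} and the $L^q$ bound on $J_k$ with $q>1$.
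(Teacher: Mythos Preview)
Your proposal is correct and follows essentially the same direct-method strategy as the paper: coerciveness from \ref{itm:growth1} plus Poincar\'e, distributional convergence of the $2\times2$ minors (the null-Lagrangian structure, needing $p>4/3$), the weak--strong pairing of $\bm f_{k,1}\times\bm f_{k,2}\rightharpoonup$ in $L^q$ against $\bm n(\bm f_k)\to\bm n(\bm f^*)$ in $L^{q'}$ to get $J_k\rightharpoonup J[\bm f^*]$, and then lower semicontinuity via convexity of $\Phi$. The only notable differences are cosmetic: the paper proves the lower-semicontinuity step by hand via Mazur's lemma (its Proposition~\ref{prop:wlsc}) rather than invoking De~Giorgi--Ioffe, and it establishes $J[\bm f^*]>0$ a.e.\ \emph{before} applying lower semicontinuity---using Mazur again plus a contradiction with \ref{itm:growth2} and Fatou along the minimizing sequence---whereas you deduce it \emph{after}, from $E[\bm f^*]<\infty$ (which tacitly requires extending $\Phi$ by $+\infty$ on $\{J\le0\}$ so that the Ioffe theorem applies and finiteness of the integral forces $J>0$).
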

\begin{proof}
    Integrating the growth condition~\ref{itm:growth1} yields
    \begin{align*}
        \int_\Omega W(x,\nabla\bm f)\dif x\geq C_1\left\{\norm{\nabla\bm f}^p_{L^p}+\norm{J}^q_{L^q}\right\}+C_2'.
    \end{align*}
    Due to the boundary condition specified in~\eqref{eq:admissible-set}, we may use a Poincar\'e inequality to obtain the coerciveness condition,
    \begin{align}\label{eqn:coerciveness}
        E[\bm f]\geq C\{\norm{\bm f}^p_{W^{1,p}}+\norm{J}_{L^q}^q\}+D,
    \end{align}
    where $C>0$ and $D$ are constants.
    \smallskip
    
    Let $\{\bm f^n\}\subset\mathcal{A}$ be a minimizing sequence for $E[\cdot]$, i.e.
    \begin{align*}
        \lim_{n\rightarrow\infty}E[\bm f^n]=\inf_{\bm f\in\mathcal A}E[\bm f].
    \end{align*}
    From~\eqref{eqn:coerciveness}, we see that $\{\bm f^n\}$ is bounded in $W^{1,p}$, and thus, there exists $\bm f^*\in W^{1,p}(\Omega,\RR^3)$ such that some subsequence $\bm f^k\weakarrow\bm f^*$ weakly in $W^{1,p}$. In view of~\eqref{eqn:local-orientation}, inequality~\eqref{eqn:coerciveness} also implies that $\bm f_{,1}^k\times\bm f_{,2}^k$ is bounded in $L^q(\Omega,\RR^3)$. Hence, there exists $\bm\beta\in L^q(\Omega,\RR^3)$ and a further subsequence (not relabelled) such that $\bm f_{,1}^k\times\bm f_{,2}^k\weakarrow\bm\beta$ weakly in $L^q$.

    For $p\geq2$, the three components of $\bm f_{,1}^k\times\bm f_{,2}^k$ are well defined $L^1$ functions, while for $4/3<p<2$, they should be interpreted in the distributional sense. For example, the first component of $\bm f_{,1}^k\times\bm f_{,2}^k$ along $(1,0,0)$ is understood as
    \begin{align*}
        \int_\Omega \!\!\det\begin{bmatrix}f^k_{2,1}&f^k_{2,2}\\f^k_{3,1}&f^k_{3,2}\end{bmatrix}\phi\dif x:=-\!\!\int_\Omega\begin{bmatrix}f^k_{2,1}&-f^k_{3,1}\\-f^k_{2,2}&f^k_{3,2}\end{bmatrix}\begin{bmatrix}\phi_{,1}\\\phi_{,2}\end{bmatrix}\cdot \begin{bmatrix}f_2^k\\f_3^k\end{bmatrix}\dif x\quad\forall\phi\in C^\infty_c(\Omega).
    \end{align*}
    In any case, it is well known that each of the components converges as a distribution~\cite[Theorem 1.14]{dacorogna2007direct}. Therefore,
    \begin{align*}
        \int_\Omega(\bm f_{,1}^k\times\bm f_{,2}^k)\phi\dif x\rightarrow\int_\Omega(\bm f_{,1}^*\times\bm f_{,2}^*)\phi\dif x\quad\forall\phi\in C^\infty_c(\Omega).
    \end{align*}
    Comparing this with the weak convergence in $L^q$, we conclude that
    \begin{align*}
        \bm f_{,1}^k\times\bm f_{,2}^k\weakarrow\bm f_{,1}^*\times\bm f_{,2}^*\text{ in }L^q.
    \end{align*}
    
    We now focus on the convergence of $J^k:=J[\bm f^k]$. By compact embedding, $\bm f^k\rightarrow\bm f^*$ in $L^p$. Thus, we can extract a further subsequence $\bm f^{k_j}(x)\rightarrow\bm f^*(x)$ that converges pointwise for a.e.~$x\in\Omega$. Since $\bm n$ is a continuous field on $\mathcal{N}$, the dominated convergence theorem implies that $\bm n(\bm f^{k_j})\rightarrow \bm n(\bm f^*)$ strongly in $L^r$ for $1\leq r<\infty$. We may choose $r=q/(q-1)<\infty$ (since $q>1$). Referring again to~\eqref{eqn:local-orientation}, we then deduce
    \begin{align*}
        \int_\Omega J[\bm f^{k_j}]v\dif x = \int_\Omega\bm n(\bm f^{k_j})&\cdot(\bm f_{,1}^{k_j}\times\bm f_{,2}^{k_j})v\dif x\\
        &\rightarrow\int_\Omega \bm n(\bm f^*)\cdot(\bm f_{,1}^*\times\bm f_{,2}^*)v\dif x = \int_\Omega J[\bm f^*]v\dif x,
    \end{align*}
    for all $v\in L^\infty(\Omega)$, i.e.~$J^{k_j}\weakarrow J^*$ weakly in $L^1$. Futhermore, noting that $\bm n(\bm f^*)\in L^\infty(\Omega,\RR^3)$ and $\bm f_{,1}^*\times\bm f_{,2}^*\in L^q(\Omega,\RR^3)$, we deduce that $J^*\in L^q(\Omega)$.
    \smallskip
    
    Next, we show that $\bm f^*\in\mathcal A$. First, $\{\bm f^k\}\subset\mathcal{A}$ implies that $\bm f^*\in W^{1,p}(\Omega,\mathcal{N})$, by the weak closure of the latter. Next, we claim that $J^*>0$ a.e. in $\Omega$. By virtue of Mazur's theorem, we can construct a sequence of convex combinations of the sequence $\{J^k\}$ that converges strongly in $L^1$ to $J^*$. Thus, there is a subsequence converging to $J^*$ a.e.~in $\Omega$. Since each $J^k>0$ a.e., we deduce that $J^*\geq0$ a.e. Now suppose that that $J^*=0$ a.e.~in $\mathcal{O}\subset\Omega$, where $\abs{\mathcal{O}}>0$. Employing $\chi_{\mathcal{O}}$ as a test function, the weak convergence of $J^k$ implies $J^k\rightarrow 0$ strongly in $L^1(\mathcal{O})$. Thus, for a subsequence (not relabelled), $J^k\rightarrow 0$ a.e.~in $\mathcal{O}$. But then~\ref{itm:growth2} and Fatou's lemma imply
    \begin{align*}
        \liminf_{j\rightarrow\infty}E[\bm f^j]\geq\int_{\mathcal{O}}\lim_{j\rightarrow\infty}W(x,\nabla\bm f^j(x))\dif x +C=\infty,
    \end{align*}
    which contradicts our hypothesis that $\inf_{\mathcal{A}}E[\bm f]<\infty$. Hence, $J^*>0$ a.e.~in $\Omega$. Finally, since the Sobolev trace operator $W^{1,p}\rightarrow L^p$ is compact~\cite{necas2011direct}, it follows that $\bm f^{k}\weakarrow\bm f^*$ in $W^{1,p}(\Omega,\RR^3)$ $\implies$ $\bm f^k\rightarrow\bm f^*$ in $L^p(\partial\Omega,\RR^3)$. Thus, $\bm f^*|_{\Gamma}=\bm f_o|_\Gamma$ in the trace sense.
    
    To complete the proof, we combine the results above with Proposition~\ref{prop:wlsc}, below to conclude $E[\bm f^*]\leq\liminf_{k\rightarrow\infty}E[\bm f^k]$ with $\bm f^*\in\mathcal{A}$, i.e., $E$ attains its infimum on $\mathcal{A}$.
\end{proof}

\begin{proposition}\label{prop:wlsc}
    The energy functional~\eqref{eqn:energy} is (sequentially) weakly lower semicontinuous, i.e.,
    \begin{align*}
        \liminf_{k\rightarrow\infty}E[\bm f^k]\geq E[\bm f],
    \end{align*}
    whenever $\bm f^k\weakarrow\bm f$ weakly in $W^{1,p}(\Omega,\mathcal N)$ for $p\geq1$ and $J^k:=J[\bm f^k]\weakarrow J:=J[\bm f]$ weakly in $L^1$, with $J^k,J>0$ a.e. in $\Omega$.
\end{proposition}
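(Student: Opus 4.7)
The plan is to exploit the joint convexity of $\Phi$ from~\ref{itm:polyconvexity} via Mazur's theorem, and then conclude by pointwise Jensen and Fatou's lemma.

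First, by~\ref{itm:polyconvexity}, $E[\bm f^k]=\int_\Omega\Phi(x,\nabla\bm f^k,J^k)\dif x$. The weak convergence $\bm f^k\weakarrow\bm f$ in $W^{1,p}$, together with boundedness of $\Omega$, gives $\nabla\bm f^k\weakarrow\nabla\bm f$ weakly in $L^1(\Omega,\RR^{3\times 2})$; combined with the hypothesis $J^k\weakarrow J$ in $L^1$, the pair converges weakly in $L^1\times L^1$. By Mazur's theorem, there exist finite convex combinations
\begin{align*}
    (\hat{\bm F}^N,\hat J^N)=\sum_{k\geq N}\lambda^N_k(\nabla\bm f^k,J^k),\qquad \lambda^N_k\geq 0,\ \textstyle\sum_k\lambda^N_k=1,
\end{align*}
converging strongly in $L^1\times L^1$ to $(\nabla\bm f,J)$, and hence a.e.~along a subsequence. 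Each $\hat J^N>0$ a.e. as a convex combination of a.e.-positive functions, so the arguments stay in the domain of $\Phi$.

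Next, joint convexity of $\Phi(x,\cdot,\cdot)$ on $\RR^{3\times 2}\times(0,\infty)$ applied pointwise yields
\begin{align*}
    \Phi(x,\hat{\bm F}^N,\hat J^N)\leq\sum_{k\geq N}\lambda^N_k\,\Phi(x,\nabla\bm f^k,J^k)\quad\text{a.e. in }\Omega.
\end{align*}
Integrating and letting $N\to\infty$: on the right, each convex combination is bounded by $\sup_{k\geq N}E[\bm f^k]$, which tends to $\liminf_{k\to\infty}E[\bm f^k]$. On the left, continuity of $\Phi(x,\cdot,\cdot)$ on the open convex set $\RR^{3\times 2}\times(0,\infty)$ (a consequence of convexity) together with the a.e.~convergence $(\hat{\bm F}^N,\hat J^N)\to(\nabla\bm f,J)$ gives $\Phi(x,\hat{\bm F}^N,\hat J^N)\to\Phi(x,\nabla\bm f,J)$ a.e., and Fatou's lemma (valid since $\Phi\geq 0$) recovers $\int_\Omega\Phi(x,\nabla\bm f,J)\dif x=E[\bm f]$ as a lower bound. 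Chaining these estimates yields $E[\bm f]\leq\liminf_{k\to\infty}E[\bm f^k]$.

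The main subtlety I expect is ensuring that Fatou's lemma delivers exactly $\Phi(x,\nabla\bm f,J)$ as the pointwise limit, which requires the a.e.~limit $(\nabla\bm f,J)$ to lie in the open domain where $\Phi$ is continuous, i.e., $J>0$ a.e. This is exactly the positivity assumption in the hypothesis, so the proposition leans on $J,J^k>0$ a.e.~rather than on~\ref{itm:growth2} directly; the latter would only be needed if one wished to extend $\Phi$ by $+\infty$ at $\{J\leq 0\}$ and work with the extended normal integrand on all of $\RR^{3\times 2}\times\RR$.
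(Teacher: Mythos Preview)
Your approach is essentially the paper's own: Mazur's theorem on the pair $(\nabla\bm f^k,J^k)$, pointwise convexity of $\Phi$, continuity of $\Phi$ on the open convex domain, then Fatou. However, one step is wrong as written. You bound the integrated right-hand side by $\sup_{k\geq N}E[\bm f^k]$ and then assert that this tends to $\liminf_{k\to\infty}E[\bm f^k]$. It does not: by definition, $\lim_{N\to\infty}\sup_{k\geq N}E[\bm f^k]=\limsup_{k\to\infty}E[\bm f^k]$. So your chain of inequalities only yields $E[\bm f]\leq\limsup_{k\to\infty}E[\bm f^k]$, which is strictly weaker than the claim.

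The fix is exactly the step the paper begins with and you omit: pass at the outset to a subsequence along which $\lim_{k\to\infty}E[\bm f^k]=\liminf_{k\to\infty}E[\bm f^k]$. Along such a subsequence the tail averages $\sum_{k\geq N}\lambda^N_kE[\bm f^k]$ converge to that common limit, and your Fatou argument then delivers the desired inequality for the original $\liminf$. With this preliminary reduction added, your proof coincides with the paper's.
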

\begin{proof}
    Assume (by passing through a subsequence, if necessary) that 
    \begin{align*}
        \lim_{k\rightarrow\infty}E[\bm f^k]=\liminf_{k\rightarrow\infty}E[\bm f^k].
    \end{align*}
    From Mazur's theorem, there exist integers $j(k)\geq k$ and real numbers $c^k_l\geq 0$ for $k\leq l\leq j(k)$ satisfying $\sum_{l=k}^{j(k)}c_l^k=1$ such that
    \begin{align*}
        \sum_{l=k}^{j(k)}c_l^k(\nabla \bm f^l,J^l) \rightarrow (\nabla \bm f,J)\text{ strongly in }L^p(\Omega,\RR^3)\times L^1(\Omega),
    \end{align*}
    as $k\rightarrow\infty$. Therefore, there is a further subsequence
    \begin{align*}
        \sum_{l=k_m}^{j(k_m)}c_l^{k_m}(\nabla \bm f^{l}(x),J^{l}(x)) \rightarrow (\nabla \bm f(x),J(x))\text{ for almost all }x\in\Omega,
    \end{align*}
    where $\lim_{m\rightarrow\infty}k_m=\infty$. Since $(\bm F,J)\mapsto\Phi(x,\bm F,J)$ is convex (therefore continuous) and $J^k,J>0$ for almost all $x\in\Omega$,
    \begin{align*}
        \lim_{m\rightarrow\infty}\Phi\left(x,\sum_{l=k_m}^{j(k_m)}c_l^{k_m}(\nabla \bm f^{l}(x),J^{l}(x))\right)=\Phi(x,\nabla\bm f(x),J(x)),
    \end{align*}
    for almost all $x\in\Omega$.
    
    Then, from Fatou's lemma and convexity,
    \begin{align*}
        E[\bm f] = \int_\Omega \Phi(x,\nabla\bm f(x),J(x))\dif x &\leq \liminf_{m\rightarrow\infty}\int_{\Omega}\Phi\left(x,\sum_{l=k_m}^{j(k_m)}c_l^{k_m}(\nabla \bm f^{l}(x),J^{l}(x))\right)\dif x\\
        &\leq\liminf_{m\rightarrow\infty}\sum_{l=k_m}^{j(k_m)}c_l^{k_m}\int_\Omega \Phi(x,\nabla\bm f^{l}(x),J^{l}(x))\dif x\\
        &=\lim_{m\rightarrow\infty}\int_\Omega\Phi(x,\nabla\bm f^{k_m}(x),J^{k_m}(x))\dif x\\
        &=\lim_{k\rightarrow\infty}\int_\Omega\Phi(x,\nabla\bm f^k(x),J^k(x))\dif x\\
        &=\lim_{k\rightarrow\infty}E[\bm f^k].\qedhere
    \end{align*}
\end{proof}
    
\section{Global Invertibility of Minimizing Configurations}\label{sec:global-invertibility}
We henceforth specialize to a class of Dirichlet-placement problems, viz., we set $\Gamma\equiv\partial\Omega$ in $\mathcal{A}$, cf.~\eqref{eq:admissible-set}. Our goal is to show that an energy minimizer given by Theorem~\ref{thm:existence} is a homeomorphism onto its image. We employ a theorem due to Ball~\cite{ball1981global}, adapted to our setting in $W^{1,p}(\Omega,\mathcal{N})$. We assume $p>2$ throughout this section. We first discuss the Brouwer degree, a key tool generalized to mappings in $W^{1,p}(D,\RR^n)$ in~\cite{ball1981global}, $p>n$, where $D\subset\RR^n$ is a bounded domain.

For any $\bm u\in C(\bar\Omega,\mathcal{N})$, the Brouwer degree of $\bm u$ with respect to $\Omega$ at $\bm y\in\mathcal{N}\setminus\bm u(\partial\Omega)$, denoted $\deg(\bm u,\Omega,\bm y)$, is a well-defined integer depending only on $\bm u|_{\partial\Omega}$, cf.~\cite{milnor1965topology}. In addition, if $\bm u$ is continuously differentiable, we have the formula
\begin{align}\label{eq:degree}
    \deg(\bm u,\Omega,\bm y)=\int_{\Omega}\rho(\bm u(x)-\bm y)J[\bm u(x)]\dif x,
\end{align}
where $\rho:\RR^n\rightarrow[0,\infty)$ is any $C^\infty$ map with compact support in some sufficiently small ball centered at the origin such that $\int_{\RR^n}\rho(\bm x)\dif\bm x=1$, and $J[\bm u]$ is given by the first defining equality ~\eqref{eqn:local-orientation} (without assuming positivity). Equivalently, $J[\bm u]=\sgn\{\bm n(\bm u)\cdot(\bm u_{,1}\times\bm u_{,2})\}\abs{\bm u_{,1}\times\bm u_{,2}}$.  Although formula~\eqref{eq:degree} presumes that $\bm y$ is a regular value, it can be shown that $\bm y\mapsto\deg(\bm u,\Omega,\bm y)$ is constant on connected components of $\mathcal{N}\setminus\bm u(\partial\Omega)$ in the continuous case. For a given $\bm u\in W^{1,p}(\Omega,\mathcal{N})$, there exists a sequence of smooth functions $\bm u_j\rightarrow\bm u$ in $W^{1,p}(\Omega,\mathcal{N})$ and consequently in $C(\bar\Omega,\mathcal{N})$ ($p>2$)~\cite{hajlasz2009sobolev}. As in~\cite{ball1981global}, the right side of~\eqref{eq:degree} defines a continuous functional on $W^{1,p}(\Omega,\mathcal{N})$. Thus, after the substitution of such an approximating sequence, we may take the limit, using the continuity properties of the degree (e.g.,~\cite{kielhofer2011bifurcation}), to deduce that~\eqref{eq:degree} holds for all $\bm u\in W^{1,p}(\Omega,\mathcal{N})$. With this in hand, Theorem 1 (i), (iii) of~\cite{ball1981global} yields:
\begin{proposition}\label{thm:a.e.injectivity}
    Assume that $\bm f_o\in C(\bar\Omega,\mathcal{N})$ is injective in $\Omega$. Then the minimizer $\bm f^*$ of Theorem~\ref{thm:existence} satisfies $\bm f^*(\bar\Omega)=\bm f_o(\bar\Omega)$, and $\bm f^*$ is injective a.e.~in $\Omega$, i.e., $\mathrm{card}(\bm f^{-1}(\bm y))=1$ for almost all $\bm y\in\bm f^*(\Omega)$.
\end{proposition}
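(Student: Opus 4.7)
The plan is to adapt Theorem~1(i),(iii) of~\cite{ball1981global} from the Euclidean $\RR^n \to \RR^n$ setting to mappings into the oriented surface $\mathcal{N}$, using the degree formula~\eqref{eq:degree} together with the area formula for Sobolev maps. Since $p > 2$, Morrey's embedding gives $W^{1,p}(\Omega,\mathcal{N}) \hookrightarrow C(\bar\Omega,\mathcal{N})$, so I may regard $\bm f^*$ as continuous and agreeing with $\bm f_o$ pointwise on $\partial\Omega$. Consequently, for every $\bm y \in \mathcal{N}\setminus \bm f_o(\partial\Omega)$,
\begin{align*}
\deg(\bm f^*,\Omega,\bm y) = \deg(\bm f_o,\Omega,\bm y).
\end{align*}
Because $\bm f_o$ is continuous and injective on the compact set $\bar\Omega$, it is a homeomorphism onto its image. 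A direct evaluation of~\eqref{eq:degree} at $\bm f_o$, using that $\bm f_o$ is a bijection with $J[\bm f_o]>0$, yields $\deg(\bm f_o,\Omega,\cdot)=1$ on the component $\bm f_o(\Omega)$ of $\mathcal{N}\setminus\bm f_o(\partial\Omega)$, and $0$ on the remaining components (where $\bm y$ lies outside the range of $\bm f_o$, forcing the integrand in~\eqref{eq:degree} to vanish for a sufficiently small bump $\rho$).

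Next, I would test~\eqref{eq:degree} against bounded Borel functions $\phi$ with $\operatorname{supp}\phi\cap \bm f^*(\partial\Omega)=\emptyset$; the smooth-approximation and continuity argument of~\cite{ball1981global} then gives the identity
\begin{align*}
\int_\Omega \phi(\bm f^*(x))\,J[\bm f^*(x)]\dif x \;=\; \int_{\mathcal{N}} \phi(\bm y)\,\deg(\bm f^*,\Omega,\bm y)\,\dif\sigma(\bm y).
\end{align*}
Choosing $\phi = \chi_A$ with $A\subset \mathcal{N}\setminus \bm f_o(\bar\Omega)$ makes the right side vanish, and $J[\bm f^*]>0$ a.e.\ then forces $\bm f^*(x)\in \bm f_o(\bar\Omega)$ a.e.\ in $\Omega$, hence everywhere by continuity. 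Conversely, selecting $\phi=\chi_A$ for any open $A\subset \bm f_o(\Omega)$ of positive surface measure yields $|\bm f^{*-1}(A)|>0$, so $A\subset \bm f^*(\bar\Omega)$. Combined with $\bm f^*|_{\partial\Omega}=\bm f_o|_{\partial\Omega}$, this establishes $\bm f^*(\bar\Omega)=\bm f_o(\bar\Omega)$.

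For the a.e.\ injectivity claim, I would invoke the area formula for Sobolev maps, which holds for $p>n=2$ since such maps have the Luzin N property and are classically differentiable a.e.:
\begin{align*}
\int_\Omega \phi(\bm f^*(x))\,J[\bm f^*(x)]\dif x \;=\; \int_{\mathcal{N}} \phi(\bm y)\,\mathrm{card}(\bm f^{*-1}(\bm y))\,\dif\sigma(\bm y).
\end{align*}
Equating this with the previous identity and using $\deg(\bm f^*,\Omega,\bm y)=1$ for a.e.\ $\bm y\in \bm f_o(\Omega)=\bm f^*(\Omega)$, I conclude $\mathrm{card}(\bm f^{*-1}(\bm y))=1$ almost everywhere in $\bm f^*(\Omega)$.

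The main obstacle is that the target is a curved $2$-manifold rather than a planar domain: the bump $\rho$ in~\eqref{eq:degree}, the smooth approximation $\bm u_j\to \bm u$ in $W^{1,p}(\Omega,\mathcal{N})$, and the area formula must all be interpreted relative to the surface measure on $\mathcal{N}$. I would handle this by covering the compact set $\bm f_o(\bar\Omega)$ by a finite atlas on $\mathcal{N}$; within each chart $J[\bm f^*]$ becomes a standard $2\times 2$ Jacobian times a smooth, bounded metric factor, so Ball's planar arguments apply verbatim. A partition-of-unity patches the local identities into the global ones used above.
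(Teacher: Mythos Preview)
Your proposal is correct and follows the same route as the paper: the paper's entire argument is to verify that the degree formula~\eqref{eq:degree} extends to $W^{1,p}(\Omega,\mathcal{N})$ via smooth approximation and then invoke Theorem~1(i),(iii) of~\cite{ball1981global}, whose proof you have faithfully sketched in the surface setting. One small point: you compute $\deg(\bm f_o,\Omega,\cdot)=1$ by assuming $J[\bm f_o]>0$, which is not part of the stated hypotheses; the cleaner route (as in Ball) is that injectivity of $\bm f_o$ gives degree $\pm 1$ on $\bm f_o(\Omega)$ by invariance of domain, and the sign is then forced to be $+1$ because~\eqref{eq:degree} applied to $\bm f^*$ with $J[\bm f^*]>0$ a.e.\ shows the common degree is nonnegative.
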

\begin{remark}\label{rem:p>2}
    With $p>2$, we may replace~\ref{itm:growth1} (leading to Theorem~\ref{thm:existence}) by: ``There are constants $C_1>0$, $C_2$ such that $W(x,\bm F)\geq C_1\abs{\bm F}^p+C_2$''. For instance, see~\cite{evans2022partial}.
\end{remark}

Next, we replace~\ref{itm:growth1} and strengthen~\ref{itm:growth2} via

\begin{description}
    \item[(H1)$'$\label{itm:growth1'}] There are constants $C_1>0$, $C_2$ and $r>p/(p-2)$ such that
    \begin{align*}
        W(x,\bm F)\geq C_1\left\{\abs{\bm F}^p+J^{-r}\right\}+C_2.
    \end{align*}
\end{description}
Precisely the same arguments employed in Section~\ref{sec:existence} lead to:
\begin{proposition}\label{thm:existence'}
    Given~\ref{itm:growth1'} and~\ref{itm:polyconvexity}, assume the hypotheses of Theorem~\ref{thm:existence} and suppose that $\bm f_o\in\mathcal{A}$ with $E[\bm f_o]<\infty$. Then the total energy~\eqref{eqn:energy} attains its minimum at some $\bm f^*\in\mathcal{A}$, cf.~\eqref{eq:admissible-set}.
\end{proposition}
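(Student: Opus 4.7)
The plan is to parallel the proof of Theorem~\ref{thm:existence}, replacing the $L^q$ bound on $J$ (which is no longer directly available) by a uniform $L^1$ bound on $J^{-r}$ that, in the end, delivers the a.e.\ positivity of the limiting $J^*$. Since $p>2$ throughout this section (cf.~Remark~\ref{rem:p>2}), several compactness steps are more transparent than in the original argument.

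First I would use the $\abs{\bm F}^p$ term in~\ref{itm:growth1'} together with the Dirichlet trace $\bm f|_{\partial\Omega}=\bm f_o|_{\partial\Omega}$ and the Poincar\'e inequality to obtain coerciveness of $E$ in $W^{1,p}(\Omega,\RR^3)$. For a minimizing sequence $\{\bm f^k\}\subset\mathcal{A}$ this produces a subsequence $\bm f^k\weakarrow\bm f^*$ weakly in $W^{1,p}$; since $p>2$, the compact embedding $W^{1,p}\hookrightarrow C(\bar\Omega,\RR^3)$ gives $\bm f^k\to\bm f^*$ uniformly on $\bar\Omega$. Consequently $\bm f^*\in W^{1,p}(\Omega,\mathcal{N})$ by pointwise evaluation, the boundary condition $\bm f^*|_{\partial\Omega}=\bm f_o|_{\partial\Omega}$ is inherited directly, and $\bm n(\bm f^k)\to\bm n(\bm f^*)$ uniformly by continuity of $\bm n$ on $\mathcal{N}$. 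By H\"older's inequality the cross products $\bm f^k_{,1}\times\bm f^k_{,2}$ are bounded in $L^{p/2}(\Omega,\RR^3)$ with $p/2>1$; extracting a further subsequence and identifying the weak limit with $\bm f^*_{,1}\times\bm f^*_{,2}$ by the same distributional argument as in Theorem~\ref{thm:existence}, and then pairing with the uniformly convergent $\bm n(\bm f^k)$, I conclude that $J^k\weakarrow J^*:=J[\bm f^*]$ weakly in $L^{p/2}$, and therefore also in $L^1$ on the bounded domain $\Omega$.

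The main obstacle is showing $J^*>0$ a.e., which is where~\ref{itm:growth1'} plays its essential role. The energy bound $\sup_k E[\bm f^k]<\infty$ yields $\int_\Omega (J^k)^{-r}\dif x\le M$ uniformly in $k$. Applying Mazur's theorem to $J^k\weakarrow J^*$ in $L^{p/2}$, I obtain convex combinations $\tilde J^m=\sum_{l=k_m}^{j(k_m)}c_l^{k_m}J^l$ with $\tilde J^m\to J^*$ strongly in $L^{p/2}$ and, along a further subsequence, pointwise a.e.\ in $\Omega$. Since $t\mapsto t^{-r}$ is convex on $(0,\infty)$, Jensen's inequality yields pointwise $(\tilde J^m)^{-r}\le\sum_l c_l^{k_m}(J^l)^{-r}$, and integration gives $\int_\Omega (\tilde J^m)^{-r}\dif x\le M$ uniformly in $m$. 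If $J^*$ vanished on a set $\mathcal{O}\subset\Omega$ of positive measure, then $(\tilde J^m)^{-r}\to+\infty$ a.e.\ on $\mathcal{O}$, and Fatou's lemma would contradict the uniform $M$-bound. Hence $J^*>0$ a.e.\ in $\Omega$; since $J^*\in L^{p/2}$ with $p/2>1$, we have $\bm f^*\in\mathcal{A}$ (taking $q:=p/2$ in~\eqref{eq:admissible-set}). Finally, Proposition~\ref{prop:wlsc} applies and delivers $E[\bm f^*]\le\liminf_k E[\bm f^k]=\inf_{\mathcal{A}}E$, completing the argument.
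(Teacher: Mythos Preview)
Your argument is correct and follows essentially the same direct-method template that the paper invokes (the paper offers no separate proof, merely stating that ``precisely the same arguments employed in Section~\ref{sec:existence}'' apply). Your treatment of $J^*>0$ a.e.\ via Jensen's inequality on the uniform $L^1$ bound for $(J^k)^{-r}$ is a clean variant of---and equivalent in spirit to---the paper's original contradiction argument that passes through~\ref{itm:growth2} and applies Fatou's lemma to $W$ itself.
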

We specify some terminology before proceeding. Let $\Sigma\subset\mathcal{N}$ be relatively open and bounded. We say that $\Sigma$ has a locally Lipschitz boundary if: For each $\bm y\in\partial\Sigma$, there is an open neighbourhood $\mathcal{O}_{\bm y}\subset\RR^3$ of $\bm y$ and a local coordinate patch for $\mathcal{N}$ such that the inverse image of $\partial\Sigma\cap\mathcal{O}_{\bm y}$ relative to the patch is the graph of a Lipschitz continuous function in the parameter plane. The Sobolev space $W^{1,s}(\Sigma,\RR^2)$ is defined in the standard way via coordinate charts, e.g.,~\cite{taylor1996partial}.  
We now employ Theorem 2 of~\cite{ball1981global} to obtain:
\begin{proposition}\label{thm:homeo}
    Assume the hypotheses of Propositions~\ref{thm:a.e.injectivity} and~\ref{thm:existence'} (with $\Gamma=\partial\Omega$) and suppose that $\bm f_o(\Omega)$ is a locally Lipschitz domain. Then $\bm f^*$ of Proposition~\ref{thm:existence'} is a homeomorphism of $\bar\Omega$ onto $\bm f_o(\bar\Omega)$. Let $\bm h^*$ denote the inverse deformation, viz., $\bm y = \bm f^*(x)\Leftrightarrow x=\bm h^*(\bm y)$. Then we also have $\bm h^*\in W^{1,s}(\bm f_o(\Omega),\RR^2)$, where $s=p(1+r)/(p+r)$.
\end{proposition}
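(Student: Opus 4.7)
My plan is to reduce the statement to Theorem 2 of~\cite{ball1981global} for planar maps by working in local coordinate charts on $\mathcal{N}$, then glue the chart-wise conclusions into a global homeomorphism. Since $p>2$, Morrey's embedding gives $\bm f^*,\bm f_o\in C(\bar\Omega,\mathcal{N})$, so the assumed injectivity of $\bm f_o$ upgrades (continuous injection from a compact to a Hausdorff space) to a homeomorphism of $\bar\Omega$ onto $\bm f_o(\bar\Omega)$. Combined with Proposition~\ref{thm:a.e.injectivity}, this gives $\bm f^*(\bar\Omega)=\bm f_o(\bar\Omega)$, together with a.e.\ injectivity of $\bm f^*$, $J[\bm f^*]>0$ a.e., and $J[\bm f^*]^{-r}\in L^1$ from~\ref{itm:growth1'}.

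Next I would cover the compact set $\bm f_o(\bar\Omega)$ by finitely many orientation-preserving $C^1$ patches $\bm\phi_i:U_i\to V_i\subset\mathcal{N}$, and on each $\Omega_i^*:=(\bm f^*)^{-1}(V_i)$ form the planar pullback $\bm u_i:=\bm\phi_i^{-1}\circ\bm f^*\in W^{1,p}(\Omega_i^*,\RR^2)\cap C(\bar\Omega_i^*,\RR^2)$. The identity $J[\bm f^*]=\sqrt{g_i(\bm u_i)}\det\nabla\bm u_i$, with area element $\sqrt{g_i}$ bounded above and below on compacta of $V_i$, transfers the positivity and $L^r$-integrability of the inverse Jacobian to $\bm u_i$, and a.e.\ injectivity passes to $\bm u_i$ trivially. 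On connected Lipschitz sub-domains $D\Subset\Omega_i^*$, using $\bm\phi_i^{-1}\circ\bm f_o$ as the reference homeomorphism for the trace, Theorem 2 of~\cite{ball1981global} then yields injectivity of $\bm u_i$ (hence of $\bm f^*|_{\Omega_i^*}$) and $\bm u_i^{-1}\in W^{1,s}(\bm u_i(\Omega_i^*),\RR^2)$ with the exponent $s=p(1+r)/(p+r)$ arising from Hölder's inequality applied to the pointwise bound $\abs{\nabla\bm u_i^{-1}}\sim\abs{\nabla\bm u_i}/\det\nabla\bm u_i$ after the change of variables.

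Global injectivity of $\bm f^*$ then follows because any pair $x_1\ne x_2$ with $\bm f^*(x_1)=\bm f^*(x_2)=\bm y$ forces $\bm y\in V_i$ and $x_1,x_2\in\Omega_i^*$ for some $i$, contradicting chart-wise injectivity. A continuous injection from the compact Hausdorff space $\bar\Omega$ onto the Hausdorff space $\bm f_o(\bar\Omega)$ is automatically a homeomorphism, so $\bm h^*:=(\bm f^*)^{-1}$ is continuous; the local $W^{1,s}$-bounds combine, through the chart-based definition of $W^{1,s}(\bm f_o(\Omega),\RR^2)$ recalled earlier in the paper, to give $\bm h^*\in W^{1,s}$.

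The main obstacle I anticipate is the boundary bookkeeping: the chart preimages $\Omega_i^*$ generally have boundaries partly on $\partial\Omega$, where the Dirichlet data pin the trace to $\bm f_o$, and partly in the interior of $\Omega$, where $\bm f^*$ is not a priori a homeomorphism. The hypothesis that $\bm f_o(\Omega)$ is locally Lipschitz in $\mathcal{N}$, together with a partition of unity subordinate to the chart cover, should allow the reference data to be organized so that Ball's theorem applies on a Lipschitz exhaustion of each patch. Verifying the exponent $s$ is then the routine Hölder computation $\int\abs{\nabla\bm f^*}^s J^{1-s}\dif x\le\bigl(\int\abs{\nabla\bm f^*}^p\dif x\bigr)^{s/p}\bigl(\int J^{-(s-1)p/(p-s)}\dif x\bigr)^{(p-s)/p}$ with the choice $(s-1)p/(p-s)=r$.
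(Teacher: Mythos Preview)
Your H\"older computation and the appeal to Theorem~2 of~\cite{ball1981global} are exactly what the paper uses, and the exponent $s=p(1+r)/(p+r)$ (forced by $(s-1)p/(p-s)=r$, equivalently $s/p+(s-1)/r=1$) is correct. The paper, however, does \emph{not} apply Ball's theorem chart-by-chart and then glue. It uses local coordinates only as a computational device to verify the single global hypothesis
\[
\int_\Omega\bigl|[\nabla\bm f^*(x)]^{-1}\bigr|^s J[\bm f^*(x)]\,\dif x
=\int_\Omega\bigl|\adj\nabla\bm f^*(x)\bigr|^s\bigl(J[\bm f^*(x)]\bigr)^{1-s}\,\dif x<\infty,
\]
noting that in two dimensions $\adj M$ has the same entries as $M$ up to sign and rearrangement, so $\abs{\adj\nabla\bm f^*}\in L^p$ follows from $\nabla\bm f^*\in L^p$ (modulo the smooth factor $\sqrt{a}$ coming from the chart). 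Theorem~2 of~\cite{ball1981global} is then invoked once on all of $\Omega$, with the surface target accommodated by carrying out Ball's construction in mixed convected coordinates and replacing the Euclidean Piola identity by its covariant version.

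Your chart-wise route has a genuine gap, precisely the ``boundary bookkeeping'' you flag. Ball's Theorem~2 requires the map to coincide on the \emph{entire} boundary of the domain with a continuous injection. On $\partial\Omega_i^*\cap\Omega$ you have no such reference: since $\Omega_i^*=(\bm f^*)^{-1}(V_i)$ is defined via $\bm f^*$ and not via $\bm f_o$, the map $\bm f_o$ need not send $\Omega_i^*$ into $V_i$, so $\bm\phi_i^{-1}\circ\bm f_o$ is not even defined there; and the only continuous map available on that interior boundary is $\bm f^*$ itself, whose injectivity is what you are trying to establish. A Lipschitz exhaustion or a partition of unity does not manufacture the missing boundary homeomorphism. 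The clean fix is the paper's: use charts solely to check the integrability, then apply Ball's theorem once on $\Omega$.
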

\begin{proof}
    We merely verify the main hypothesis of Theorem 2 of~\cite{ball1981global}, adapted to our setting. For convenience, we drop the superscript $\phantom{}^*$ in what follows. Since $E[\bm f]<\infty$,~\ref{itm:growth1'} implies that $\nabla\bm f\in L^p(\Omega,\RR^3)$ and $(J[\bm f])^{-1}\in L^r(\Omega)$. In order to proceed, we introduce coordinates: Let $\{\bm e_1,\bm e_2\}$ be the standard orthonormal basis for $\RR^2$, and let $\bm y=\tilde{\bm y}(y^1,y^2)\Leftrightarrow y^\alpha=\tilde{y}^\alpha(\bm y)$, $\alpha=1,2$, denote smooth local coordinates on $\mathcal{N}$. Then $\bm y=\bm f(x)\implies y^\alpha =\tilde{y}^\alpha(\bm f(x^1,x^2)):=y^\alpha(x^1,x^2)$. By the chain rule, we then deduce $\nabla\bm f=\diffp{y^\alpha}{{{x^B}}}\bm a_\alpha\otimes\bm e_B$ (summation on $\alpha,B=1,2$), where $\bm a_\alpha := \tilde{\bm y}_{,\alpha}$, $\alpha=1,2$, are the covariant basis vector fields. In other words, $[\nabla\bm f]^\alpha_{\phantom{\alpha}B}=\diffp{y^\alpha}{{{x^B}}}=\bm a^\alpha\cdot(\nabla\bm f\bm e_B)$, where the contravariant basis vector fields are given by $\bm a^\alpha=\nabla\tilde{y}^\alpha$, $\alpha=1,2$. Since $\bm a^\alpha\circ\bm f$ is continuous, each $\diffp{y^\alpha}{{{x^B}}}\in L^p$ on (some open subset of) $\Omega$.

    Let $M$ denote the $2\times 2$ matrix with components $M^\alpha_{\phantom{\alpha}B}:=\diffp{{y^\alpha}}{{{x^B}}}$. In local coordinates, $J=\sqrt{a}\det M$, where $a>0$ is the determinant of the $2\times2$ matrix with components $a_{\mu\nu}=\bm a_{\mu}\cdot\bm a_{\nu}$, cf.~\cite{doyle1956nonlinear}. Since $J>0$ a.e., it follows that $M$ is invertible a.e., the latter designation of which is understood in what follows. By Cramer's rule, $\adj M=(\det M)M^{-1}$, where $\adj M$ denotes the adjugate matrix of $M$. We then have $(\det M)[\nabla\bm f]^{-1}:=[\adj M]^B_{\phantom{B}\alpha}\bm e_B\otimes\bm a^\alpha$, where $[\nabla\bm f]^{-1}$ denotes the inverse of $\nabla\bm f(x)=M^\alpha_{\phantom{\alpha}B}\bm a_\alpha\otimes\bm e_B:\RR^2\rightarrow T_{\bm f(x)}\mathcal{N}$, and $\adj\nabla\bm f:=J[\nabla\bm f]^{-1}=\sqrt{a}[\adj M]^{A}_{\phantom{A}\gamma}\bm e_A\otimes\bm a^\gamma$, or $[\adj\nabla\bm f]^A_{\phantom{A}\gamma}=\sqrt{a}[\adj M]^A_{\phantom{A}\gamma}$. Of course, $\adj M$ and $M$ have the same components (rearranged and to within sign), while $a\circ\bm f$ is continuous. Accordingly, $\abs{\adj\nabla\bm f}\in L^p(\Omega)$ (possibly employing a finite number of coordinate patches combined with a partition-of-unity argument, e.g.,~\cite{munkres2018analysis}).

    Next, H\"older's inequality yields
    \begin{align*}
        \int_\Omega\abs{[\nabla\bm f(x)]^{-1}}^sJ[\bm f(x)]\dif x&=\int_\Omega\abs{\adj\nabla\bm f(x)}^s(J[\bm f(x)])^{1-s}\dif x\\
        &\leq\norm{\adj\nabla\bm f}_{L^p}^s\norm{J[\bm f]^{-1}}_{L^r}^{(s-1)}<\infty.
    \end{align*}
    Since $s>2$, Theorem 2 of~\cite{ball1981global} is valid.
\end{proof}

\begin{remark}
    The componential-Cartesian-based construction in $\RR^n$ employed in the proof of Theorem 2 in~\cite{ball1981global} is precisely the same in our case, but now given in terms of the mixed local ``convected'' coordinates above. However, the covariant derivative must be employed in the identity ensuring that the divergence of $\adj\nabla\bm f$ vanishes, viz., $\left.\left([\adj\nabla\bm f]^A_{\phantom{A}\beta}\right)\right\rvert_A=\left([\adj\nabla\bm f]^A_{\phantom{A}\beta}\right)_{,A}-\left\{\begin{array}{c}\gamma \\ \beta\,\nu\end{array}\right\}\left[\adj\nabla\bm f\right]^A_{\phantom{A}\gamma}\diffp{y^\nu}{{{x^A}}}=0$, where the $\left\{\begin{array}{c}\gamma \\ \beta\,\nu\end{array}\right\}$ denote the Christoffel symbols of the second kind relative to the coordinates on $\mathcal{N}$, cf.~\cite{doyle1956nonlinear}.
\end{remark}

\sloppy
\section{Weak Form of the Spatial Equilibrium Equations}\label{sec:weak-form}
With Proposition~\ref{thm:homeo} in hand, we now generalize results presented in~\cite{ball2002some} to demonstrate that the energy minimizer satisfies a weak spatial (Eulerian) form of the equilibrium equations. We henceforth assume that $W$ is continuously differentiable, and we make an additional, physically reasonable constitutive hypothesis as in~\cite[(C1)]{ball2002some}:

\begin{description}
    \item[(H4)\label{itm:derivative-growth}] There is a constant $K>0$ such that
    \begin{align*}
        \abs{W_{F}(\bm F)\bm F^T}\leq K({W(\bm F)+1}) \text{ for all }\bm F\in\RR^{3\times 2}_+.
    \end{align*}
\end{description}
Hypothesis (C1) of~\cite{ball2002some} pertains to square matrices $\bm F$. We verify hypothesis (H4) for a large class of isotropic membrane energies in Section 6.

Let $\phi:B_\epsilon\times\mathcal{N}\rightarrow\mathcal{N}$, with $B_\epsilon:=(-\epsilon,\epsilon)$, $\epsilon>0$ sufficiently small, be a $C^1$ map such that $\phi(0,\bm y)=\bm y$ and $\phi(\tau,\bm f)|_{\partial\Omega}\equiv\bm f_o$. We assume that $\phi$ and $\Dif\phi$ are uniformly bounded. Let $\Dif_y\phi(\tau,\bm y)\in L(T_y\mathcal N)$ denote the (partial) derivative of $\bm y\mapsto\phi$. We further assume that $\tau\mapsto\Dif_y\phi$ has a continuous derivative on $B_\epsilon\times\mathcal N$, denoted $\Dif_y\dot{\phi}$, which we also take to be uniformly bounded. We now consider spatial variations of the form $\bm f_\tau(x):=\phi(\tau,\bm f(x))$. By the chain rule, we have $\nabla\bm f_\tau(x)=\Dif_y\phi(\tau,\bm f(x))\nabla\bm f(x)$.
\fussy
\begin{proposition}\label{thm:admissibility}
    $\bm f_\tau\in\mathcal{A}$ for all $\tau\in B_\epsilon$.    
\end{proposition}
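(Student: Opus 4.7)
The plan is to verify each of the four defining conditions of the admissible set $\mathcal A$ for the perturbed map $\bm f_\tau(x)=\phi(\tau,\bm f(x))$, where $\bm f\equiv\bm f^*$ is the minimizer from Proposition~\ref{thm:existence'}. The main content is a pointwise identity relating the signed area ratio $J[\bm f_\tau]$ to $J[\bm f]$ via the determinant of $\Dif_y\phi$ on the tangent planes; once this is in hand, the remaining items are bookkeeping with the assumed uniform bounds on $\phi$ and $\Dif\phi$.

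First I would handle Sobolev regularity and the codomain constraint. Since $\bm y\mapsto\phi(\tau,\bm y)$ is $C^1$ on $\mathcal N$ with uniformly bounded derivative, it is Lipschitz, and the standard chain rule for a Sobolev map precomposed with a Lipschitz map (Marcus--Mizel) yields $\bm f_\tau\in W^{1,p}(\Omega,\RR^3)$ with $\nabla\bm f_\tau(x)=\Dif_y\phi(\tau,\bm f(x))\nabla\bm f(x)$ a.e. The image condition $\bm f_\tau(x)\in\mathcal N$ is automatic from the codomain of $\phi$, so $\bm f_\tau\in W^{1,p}(\Omega,\mathcal N)$; the boundary requirement $\bm f_\tau|_{\partial\Omega}=\bm f_o$ in the trace sense is exactly the standing hypothesis on $\phi$.

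Next I would derive the area-ratio identity, which I expect to be the only step requiring care. For almost every $x$, the tangent vectors $\bm f_{,\alpha}(x)$ lie in $T_{\bm f(x)}\mathcal N$, and $A:=\Dif_y\phi(\tau,\bm f(x))$ is a linear map $T_{\bm f(x)}\mathcal N\to T_{\bm f_\tau(x)}\mathcal N$. Choosing oriented orthonormal bases of each tangent plane compatible with $\bm n$, and expanding both cross products in $\RR^3$, a short bilinearity computation gives
\begin{align*}
\bm f_{\tau,1}\times\bm f_{\tau,2}=d(\tau,\bm f)\,\bigl(\bm f_{,1}\times\bm f_{,2}\cdot\bm n(\bm f)\bigr)\,\bm n(\bm f_\tau),
\end{align*}
where $d(\tau,\bm y):=\det A$ is the signed determinant of $A$ in those bases. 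Dotting with $\bm n(\bm f_\tau)$ yields the clean identity $J[\bm f_\tau]=d(\tau,\bm f)\,J[\bm f]$ a.e. The delicate point is keeping the domain and codomain tangent planes separate and verifying that orientation is preserved through $\bm n$.

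Finally, since $d(0,\bm y)\equiv 1$ and $\Dif_y\phi$ (together with $\Dif_y\dot\phi$) is continuous and uniformly bounded on $B_\epsilon\times\mathcal N$, shrinking $\epsilon>0$ if necessary produces constants $0<c\leq d(\tau,\bm y)\leq C$ uniformly. Together with $J[\bm f]>0$ a.e., this gives $J[\bm f_\tau]>0$ a.e., and the two-sided bound on $d$ transfers the integrability of $J[\bm f]$ (either $J\in L^q$ under~\ref{itm:growth1}, or $J^{-1}\in L^r$ under~\ref{itm:growth1'}) to $J[\bm f_\tau]$. All four defining conditions of $\mathcal A$ are then satisfied, so $\bm f_\tau\in\mathcal A$ for every $\tau\in B_\epsilon$.
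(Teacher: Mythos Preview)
Your argument is correct and structurally identical to the paper's: verify each defining condition of $\mathcal A$, with the heart of the matter being the multiplicative identity $J[\bm f_\tau]=\det\bigl(\Dif_y\phi(\tau,\bm f)\bigr)\,J[\bm f]$ together with positivity of this determinant for small $|\tau|$. The only tactical differences are that the paper carries out both the $W^{1,p}$ membership and the Jacobian identity via the mixed local coordinates of Proposition~\ref{thm:homeo} (writing $J=\sqrt{a}\det M$) and secures $\det\Dif_y\phi>0$ from continuity of $\bm f$ on the compact set $\bar\Omega$ (using $p>2$), whereas you invoke the Lipschitz chain rule, a direct cross-product computation in orthonormal tangent frames, and the assumed uniform bound on $\Dif_y\dot\phi$; these are minor variations on the same route.
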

\begin{proof}
    Observe first that $\Dif_y\phi(0,\bm y)=\bm{1}_y$, the latter denoting the identity on the tangent space $T_y\mathcal{N}$, $\bm y\in\mathcal{N}$. In particular, $\det\bm{1}_y=1$. By assumption, $\Dif_y\phi(\cdot)$ is continuous on $B_\epsilon\times\mathcal{N}$, and $\bm f\in C(\bar\Omega)$ for $p>2$. Thus, $\det\Dif_y\phi(\tau,\bm f(x))>0$ on $B_\epsilon\times\bar{\Omega}$. Also, we observe that for each $\tau\in B_\epsilon$, $\bm f_\tau\in C(\bar\Omega,\RR^3)$ and $\bm f_\tau(x)\in\mathcal{N}$ for all $x\in\bar\Omega$. 

    Employing the local mixed coordinate system introduced in the proof of Proposition~\ref{thm:homeo}, we obtain
    \begin{align*}
        \nabla\bm f_\tau = \Dif_y\phi\nabla\bm f = \diffp{y^\alpha}{{{x^B}}}(\Dif_y\phi\bm a_\alpha)\otimes\bm e_B.
    \end{align*}
    As shown in that proof, each of the component derivatives above belong to $L^p_{loc}(\Omega)$, and thus, $\nabla\bm f_\tau\in L^p(\Omega,\RR^3)$ for each $\tau\in B_\epsilon$. Formally applying the definition $J_\tau:=(\det[\nabla\bm f_\tau^T\nabla\bm f_\tau])^{1/2}$, we deduce $J_\tau=J\det[\Dif_y\phi(\tau,\bm f)]>0$, a.e. in $\Omega$ for all $\tau\in B_\epsilon$, where we have used $J=\sqrt{a}\det\left[\diffp{y^\alpha}{{{x^B}}}\right]>0$ a.e. (locally), cf. Theorem~\ref{thm:existence} and Proposition~\ref{thm:homeo}.
\end{proof}
For $\tau\neq 0$, we now consider the difference quotient
\begin{align}\label{eqn:first-variation-estimate}
    \frac{1}{\tau}(E[\bm f_\tau]-E[\bm f])&=\frac{1}{\tau}\int_\Omega[W(\nabla\bm f_\tau)-W(\nabla\bm f)]\dif x\nonumber\\
    &=\frac{1}{\tau}\int_\Omega\int_0^1\frac{\dif}{\dif s}W(\nabla\bm f_{s\tau})\dif s\dif x\\
    &=\int_\Omega\int_0^1 W_F(\Dif_y\phi(s\tau,\bm f)\nabla\bm f)\cdot\Dif_y\dot{\phi}(s\tau,\bm f)\nabla\bm f\dif s\dif x.\nonumber
\end{align}
In order to proceed, we need a slight generalization of~\cite[Lemma 2.5]{ball2002some}. Let $V\subset\RR^3$ denote an arbitrary 2-dimensional subspace. Assume that $\bm T\in L(V)$ and let $\bm 1$ denote the identity on $V$. Then the same argument in~\cite{ball2002some} yields
\begin{lemma}\label{thm:W-derivative-estimate}
    Given~\ref{itm:derivative-growth}, if $\abs{\bm T-\bm 1}<\delta$, with $\delta>0$ sufficiently small, then there is a constant $C>0$ such that
    \begin{align*}
        \abs{W_F(\bm T\bm A)\bm A^T}\leq C(W(\bm A)+1)\text{ for all }\bm A\in L(\RR^2,V)\cap\RR^{3\times 2}_+.
    \end{align*}
\end{lemma}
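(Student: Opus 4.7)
The plan is to follow the strategy of Lemma~2.5 in~\cite{ball2002some}, adapted to the present setting in which $\bm A$ takes values in the two-dimensional subspace $V\subset\RR^3$. First, I would extend $\bm T\in L(V)$ to an operator on $\RR^3$ by declaring it to be the identity on $V^\perp$. For $\delta$ sufficiently small, a Neumann series bound gives $\abs{\bm T_s^{-1}}\leq 2$ uniformly along the segment $\bm T_s:=\bm 1+s(\bm T-\bm 1)$, $s\in[0,1]$. Since each $\bm T_s$ maps $V$ bijectively to itself and $\bm A$ has rank two with image in $V$, the composition $\bm T_s\bm A\in L(\RR^2,V)\cap\RR^{3\times 2}_+$, so hypothesis~\ref{itm:derivative-growth} is applicable at $\bm T_s\bm A$.

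The first step is to convert~\ref{itm:derivative-growth} into an estimate in which $(\bm T_s\bm A)^T$ is replaced by $\bm A^T$. Writing $\bm A=\bm T_s^{-1}(\bm T_s\bm A)$ and transposing yields $\bm A^T=(\bm T_s\bm A)^T\bm T_s^{-T}$, whence
\begin{align*}
    \abs{W_F(\bm T_s\bm A)\bm A^T}\leq\abs{W_F(\bm T_s\bm A)(\bm T_s\bm A)^T}\,\abs{\bm T_s^{-T}}\leq 2K\bigl(W(\bm T_s\bm A)+1\bigr).
\end{align*}

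The second step, which is the main obstacle, is to bound $W(\bm T\bm A)$ by $W(\bm A)+1$. I would use the fundamental theorem of calculus along the path $s\mapsto\bm T_s\bm A$ together with the identity $\bm B\cdot\bm C\bm D=\bm B\bm D^T\cdot\bm C$ to obtain
\begin{align*}
    W(\bm T_t\bm A)-W(\bm A)=\int_0^tW_F(\bm T_s\bm A)\bm A^T\cdot(\bm T-\bm 1)\,\dif s,\quad t\in[0,1].
\end{align*}
Setting $\varphi(t):=W(\bm T_t\bm A)+1$, combining with the first step, and using $\abs{\bm T-\bm 1}<\delta$ yields the integral inequality
\begin{align*}
    \varphi(t)\leq\varphi(0)+2K\delta\int_0^t\varphi(s)\,\dif s.
\end{align*}
Gronwall's inequality then gives $\varphi(1)\leq\varphi(0)e^{2K\delta}$, i.e.~$W(\bm T\bm A)+1\leq e^{2K\delta}(W(\bm A)+1)$.

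Combining this with the first step evaluated at $s=1$ completes the argument:
\begin{align*}
    \abs{W_F(\bm T\bm A)\bm A^T}\leq 2K\bigl(W(\bm T\bm A)+1\bigr)\leq 2Ke^{2K\delta}\bigl(W(\bm A)+1\bigr),
\end{align*}
so the conclusion holds with $C=2Ke^{2K\delta}$. The essential subtlety is the apparent circularity between the bound on $\abs{W_F(\bm T_s\bm A)\bm A^T}$ along the path and the target estimate at $s=1$; Gronwall's inequality is precisely the tool required to close the loop.
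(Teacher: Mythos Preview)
Your proof is correct and follows essentially the same route as the paper (which in turn defers to~\cite{ball2002some}): the same linear path $\bm T_s=\bm 1+s(\bm T-\bm 1)$, the same bound $\abs{\bm T_s^{-1}}\leq 2$, and the same integral identity for $W(\bm T_t\bm A)-W(\bm A)$ obtained via $W_F(\bm T_s\bm A)\cdot(\bm T-\bm 1)\bm A=W_F(\bm T_s\bm A)\bm A^T\cdot(\bm T-\bm 1)$. The only difference is in how the resulting integral inequality is closed: the paper takes the supremum of $W(\bm T_s\bm A)+1$ over $s\in[0,1]$ and bootstraps to obtain $(1-2K\delta)[W(\bm T\bm A)+1]\leq W(\bm A)+1$, giving $C=2K/(1-2K\delta)$ with the additional restriction $\delta<(2K)^{-1}$, whereas you apply Gronwall's inequality to get $C=2Ke^{2K\delta}$---a cosmetic variant that in fact yields a slightly sharper constant.
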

\begin{proof}
    In our case, we have $\abs{\bm 1}=\sqrt{2}<2$. As in~\cite{ball2002some}, we may then choose a sufficiently small $\delta$ such that $\abs{\bm T(t)^{-1}}\leq 2$, where $\bm T(t):=t\bm T+(1-t)\bm 1$, for $t\in[0,1]$. Hence, the estimates from~\cite{ball2002some} are the same here, and we arrive at $(1-2K\delta)[W(\bm T\bm A)+1]\leq W(\bm A)+1$. From this and~\ref{itm:derivative-growth} we obtain the desired inequality with $C=2K/(1-2K\delta)$, with $\delta<(2K)^{-1}$.
\end{proof}
By virtue of Lemma~\ref{thm:W-derivative-estimate}, the integrand in the third line of~\eqref{eqn:first-variation-estimate} is bounded by the integrable function $C\abs{W(\nabla\bm f)+1}\sup_{B_\epsilon\times\mathcal{N}}\abs{\Dif_y\dot{\phi}(\tau,\bm y)}$. Thus, we may pass to the limit $\tau\rightarrow0$ in~\eqref{eqn:first-variation-estimate} via the dominated convergence theorem, rigorously obtaining a first-variation condition at the minimizer, viz.,
\begin{align}\label{eqn:first-variation}
    \int_\Omega[W_F(\nabla\bm f)\nabla\bm f^T]\cdot\Dif\bm\psi(\bm f)\dif x = 0,
\end{align}
for all $C^1$ vector fields $\bm\psi$ satisfying $\bm\psi(\bm f)|_{\partial\Omega}=\bm 0$ such that $\bm\psi$ and $\Dif\bm\psi$ are uniformly bounded.

\sloppy
Next, recall the notation introduced in Proposition~\ref{thm:homeo}, viz., $x=\bm h(\bm y):=\bm f^{-1}(\bm y)\Leftrightarrow\bm y=\bm f(x)$; we define $\Omega^*=\bm f(\Omega)$. We also recall the relationship between the first Piola-Kirchhoff stress $\bm S(x):=W_F(\nabla\bm f(x))$ and the Cauchy stress $\bm T(\bm y)=\bm\Sigma(\bm h(\bm y))$, viz., $J(x)\bm\Sigma(x):=\bm S(x)[\nabla\bm f(x)]^T$. Thus, the left side of~\eqref{eqn:first-variation} is equivalent to $\int_\Omega\bm\Sigma(x)\cdot\Dif\bm\psi(\bm f(x))J(x)\dif x,$ and the change of variables formula as in~\cite{ball1981global} yields
\begin{proposition}\label{eqn:equilibrium-equation}
    Assume the hypotheses of Proposition~\ref{thm:homeo} along with~\ref{itm:derivative-growth}. Then the weak spatial form of the equilibrium equations holds at an energy minimizer $\bm y=\bm f(x)$, viz.,
    \begin{align*}
        \int_{\Omega^*}\bm T(\bm y)\cdot\Dif\bm\psi(\bm y)\dif\bm y=0,
    \end{align*}
    for all vector fields $\bm\psi$ as specified after~\eqref{eqn:first-variation} and satisfying $\bm\psi|_{\partial\Omega^*}=\bm 0$, where $\partial\Omega^*:=\bm f(\partial\Omega)$.
\end{proposition}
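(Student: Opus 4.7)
The plan is to reduce Proposition~\ref{eqn:equilibrium-equation} to the reference-configuration first-variation identity~\eqref{eqn:first-variation}, which is already established in the paragraph preceding it, by pushing the integral forward to $\Omega^*$ via the change of variables $\bm y=\bm f(x)$. The first step is purely algebraic: with $\bm S:=W_F(\nabla\bm f)$, the Piola--Cauchy relation $J\bm\Sigma=\bm S(\nabla\bm f)^T$, and the definition $\bm T(\bm y)=\bm\Sigma(\bm h(\bm y))$, the integrand of~\eqref{eqn:first-variation} equals $\bm\Sigma(x)\cdot\Dif\bm\psi(\bm f(x))\,J(x)$, so~\eqref{eqn:first-variation} already reads
\begin{align*}
    \int_\Omega \bm\Sigma(x)\cdot\Dif\bm\psi(\bm f(x))\,J(x)\dif x=0.
\end{align*}

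The core of the argument is the change-of-variables formula. By Proposition~\ref{thm:homeo}, $\bm f$ is a homeomorphism of $\bar\Omega$ onto $\overline{\Omega^*}=\bm f_o(\bar\Omega)$ sending $\partial\Omega$ onto $\partial\Omega^*$, with $J>0$ a.e.\ and inverse $\bm h\in W^{1,s}(\Omega^*,\RR^2)$, $s>2$. These are precisely the hypotheses under which Ball's $\RR^n\to\RR^n$ change-of-variables formula~\cite{ball1981global} gives $\int_\Omega g(\bm f(x))J(x)\dif x=\int_{\Omega^*}g(\bm y)\dif A(\bm y)$ for nonnegative measurable $g$. In the mixed local coordinates of Proposition~\ref{thm:homeo} the surface area element on $\mathcal{N}$ is $\sqrt{a}\,\dif y^1\dif y^2$, while the pullback under $\bm f$ satisfies $J\dif x=\sqrt{a}\det M\,\dif x$, so the planar formula applies chart-by-chart and extends to all of $\Omega^*$ by a finite partition of unity subordinate to local parameterizations of $\overline{\Omega^*}$. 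Applied to $g(\bm y):=\bm T(\bm y)\cdot\Dif\bm\psi(\bm y)$, split into positive and negative parts if necessary, this yields the desired identity.

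Finally, the class of admissible test functions transfers without loss. Any $C^1$ field $\bm\psi$ defined on a neighbourhood of $\overline{\Omega^*}\subset\mathcal{N}$, uniformly bounded together with $\Dif\bm\psi$ and vanishing on $\partial\Omega^*$, pulls back via $\bm f$ to a uniformly bounded continuous field that vanishes on $\partial\Omega$, by the homeomorphism property; moreover any such $\bm\psi$ determines an admissible variation $\phi(\tau,\bm y)$ of the form required at the start of Section~\ref{sec:weak-form} (for example, through the flow of any smooth tangential extension of $\bm\psi$ to a neighbourhood of $\mathcal{N}$), so~\eqref{eqn:first-variation} is applicable. The principal technical obstacle is rigorous justification of the change of variables for Sobolev homeomorphisms whose inverse has only limited regularity; this is handled exactly by the integrability property $\abs{[\nabla\bm f]^{-1}}^sJ\in L^1(\Omega)$ together with the a.e.\ injectivity already established in Propositions~\ref{thm:a.e.injectivity} and~\ref{thm:homeo}, placing us squarely within the scope of~\cite{ball1981global}.
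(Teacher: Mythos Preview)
Your proposal is correct and follows essentially the same route as the paper: rewrite the integrand of~\eqref{eqn:first-variation} via the Piola--Cauchy relation $J\bm\Sigma=\bm S(\nabla\bm f)^T$ and then invoke the change-of-variables formula from~\cite{ball1981global}. The paper's own argument is the terse paragraph immediately preceding the statement, while you have (appropriately) spelled out the chart-by-chart reduction and the transfer of the test-function class; these are elaborations of the same proof rather than a different strategy.
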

\fussy

\section{Concluding Remarks}\label{sec:concluding-remarks}
 A finite-strain membrane model capable of sustaining compression is a crucial ingredient in the accurate prediction of wrinkling phenomena in highly stretched thin elastic sheets, e.g., ~\cite{healey2023existence,li2016stability}. Finite strains are not captured by thin-plate models derived rigorously via $\Gamma$-convergence~\cite{muller2017mathematical}. Moreover, the same approach (via different scaling) to pure membrane models leads to tension-field theory, as already discussed in Section~\ref{sec:intro}. Taken together, this provides the primary motivation for direct modeling such as that proposed in this work. Here the unit normal of the prescribed oriented surface, on which the membrane deforms, plays a similar role as the director field in finite-deformation Cosserat shell theory~\cite{healey2023energy}. Weak convergence of the oriented local area ratio can be obtained, and minimum-energy existence theorems are established in both cases.

Hypotheses~\ref{itm:polyconvexity}-\ref{itm:derivative-growth} can be verified for a class of isotropic materials. We first address~\ref{itm:polyconvexity} and \ref{itm:growth2} for energy densities of the form $W(\bm F)=\Psi(\bm F)+\Theta(J)$. Suppose that $\Psi:\RR^{3\times 2}\rightarrow[0,\infty)$ and $\Theta:(0,\infty)\rightarrow[0,+\infty)$ are each convex, with $\Theta(J)\nearrow\infty$ as $J\searrow0$, where $J=\abs{\bm f_{,1}\times\bm f_{,2}}$. Clearly the two hypotheses are satisfied. Presuming material objectivity~\eqref{eq:material-objectivity}, the material is isotropic if $W(\bm F\bm R)\equiv W(\bm F)$ for all $\bm R\in SO(2)$. This implies $W(\bm F)=\Phi(\lambda_1,\lambda_2)=\Phi(\lambda_2,\lambda_1)$, where $\lambda_1^2,\lambda_2^2>0$ denote the eigenvalues of the right Cauchy-Green Strain tensor $\bm C=\bm F^T\bm F\in\RR^{2\times2}$ and $\lambda_1,\lambda_2>0$ are the principal stretches. Let $a_1,a_2\geq0$ denote the singular values $\bm A\in\RR^{3\times2}$. Likewise, isotropy implies $\Psi(\bm A)=\Upsilon(a_1,a_2)=\Upsilon(a_2,a_1)$. By virtue of~\cite[Theorem 5.34B \& Remark 5.36]{dacorogna2007direct}, we deduce that the convexity of $\Psi$ is the same as the convexity of $\Upsilon$ and $a_\alpha\mapsto\Upsilon$ non-decreasing, $\alpha=1,2$. Hence, isotropic stored energies of the form
\begin{align*}
    W(\bm F)=\Upsilon(\lambda_1,\lambda_2)+\Theta(\lambda_1\lambda_2),
\end{align*}
with $\Upsilon$ having the aforementioned properties, satisfy hypotheses~\ref{itm:polyconvexity} and \ref{itm:growth2}. A family of examples is afforded by  $\Upsilon(\lambda_1,\lambda_2)=\sum_{j=1}^Mb_j(\lambda_1^{\gamma_j}+\lambda_2^{\gamma_j})$, $b_j>0$, $\gamma_j\geq 1$, $j=1,...,M$. We give another example below. Incidentally, hypothesis~\ref{itm:growth1} holds for these materials provided that $\max_j\gamma_j>4/3$ and $\Theta(J)\geq C J^q$; the latter are not required if $\max_j\gamma_j\geq2$, cf. Remark~\ref{rem:p>2}.

We now turn to hypothesis~\ref{itm:derivative-growth}, the left side of which represents the magnitude of what is sometimes called the Kirchhoff stress, denoted $\bm\sigma:=W_{\bm F}\bm F^T$. Let $\mathcal{V}\subset\RR^{3}$ denote the two-dimensional range of $\bm F\in\RR^{3\times 2}$. Without changing notation, we consider the invertible linear transformation $\bm F:\RR^2\rightarrow\mathcal{V}$. Note that $W_{\bm F}(\bm F)\in L(\RR^2,\mathcal{V})$, $\bm F^T\in L(\mathcal{V},\RR^2)$, and thus, $\bm\sigma\in L(\mathcal{V})$. For isotropic materials, it can be shown that $\bm\sigma=\sum_{\gamma=1}^2\lambda_\gamma\Phi_{,\gamma}\bm d_\gamma\otimes\bm d_\gamma$, where $\bm d_1,\bm d_2$ are orthonormal eigenvectors of $\bm B=\bm F\bm F^T\in L(\mathcal{V})$, e.g.,~\cite[(4.3.42)]{ogden1984non}. Therefore,~\ref{itm:derivative-growth} reads
\begin{align}
   \label{itm:derivative-growth-iso}\abs{(\lambda_1\Phi_{,1},\lambda_2\Phi_{,2})}\leq K(\Phi+1).\tag*{(H4)\textsubscript{iso}}
\end{align}
This is the two-dimensional version of the inequality given in~\cite{ball2002some} regarding bulk three-dimensional isotropic materials.

Hypothesis~\ref{itm:derivative-growth-iso} is satisfied for the class of two-dimensional isotropic materials presented above. This is similar to the claim in~\cite{ball2002some} regarding Ogden materials. For a different example, consider
\begin{align}\label{eq:isotropic-energy-example}
    \Phi(\lambda_1,\lambda_2)=a(\lambda_1^\gamma+\lambda_2^\gamma)+b[(\lambda_1)^2+(\lambda_2)^2](\lambda_1\lambda_2)^{-1}+\Theta(\lambda_1\lambda_2),
\end{align}
where $a,b>0$, $\gamma\geq1$, and $\Theta$ is as described above. A simple computation yields
\begin{align*}
    \lambda_1\Phi_{,1}=a\gamma\lambda_1^\gamma+b[(\lambda_1)^2-(\lambda_2)^2](\lambda_1\lambda_2)^{-1}+\lambda_1\lambda_2\Theta'(\lambda_1\lambda_2),
\end{align*}
and a similar expression follows for $\lambda_2\Phi_{,2}$ (by swapping indices). Clearly, we can find a constant $K>0$ so that~\ref{itm:derivative-growth-iso} is satisfied, provided that $\abs{J\Theta'(J)}\leq C(\Theta(J)+1)$ for all $J>0$. The latter condition is also given in~\cite{ball2002some}.

We mention that~\eqref{eq:isotropic-energy-example} satisfies hypotheses~\ref{itm:polyconvexity} and~\ref{itm:growth2}. Regarding the former, this follows by expressing the second term on the right side of~\eqref{eq:isotropic-energy-example} as ``$b(\bm F\cdot\bm F)J^{-1}$'', which is readily shown to be convex in $(\bm F,J)$ via differentiation. The latter is given in~\cite{rosakis1994relation} as an example of a polyconvex energy density for bulk two-dimensional nonlinear elasticity. We also take the opportunity to note that the function $(\bm F,J)\mapsto(\bm F\cdot\bm F)J^{-2}$ is not convex. If this term replaces the second term in~\eqref{eq:isotropic-energy-example}, the resulting energy density apparently does not satisfy~\ref{itm:polyconvexity}, nor is it polyconvex in the context of bulk two-dimensional nonlinear elasticity. In any case, this corrects a statement made in~\cite{healey2023existence} in reference to a similar example (with $\gamma = 2$) presented in~\cite{muller2004rubber}.

As in~\cite{ball2002some}, the weak form of the Eulerian equilibrium equations given in Proposition 11 does not imply the weak form of the Euler-Lagrange equations of~\eqref{eqn:energy}. Another result of~\cite{ball2002some}, based on an assumption akin to~\ref{itm:derivative-growth}, rigorously ensures the weak form of the conservation law for the Eshelby energy-momentum tensor. This carries over directly to our case as well, since the so-called inner variation is taken in the flat reference configuration $\Omega\subset\RR^2$.

The generalization of our problem to nonlinearly elastic maps in a 3-dimensional setting is clear. This corresponds to the deformation of a bulk hyperelastic body into a nontrivial (nice enough) 3-dimensional manifold without boundary. However, a physical realization of the problem is not apparent.

\appendix
\section*{Appendix}
We provide a proof of the fact that stored energies $W:\bar\Omega\times\RR^{3\times2}_+\rightarrow[0,\infty)$ satisfying material objectivity ~\eqref{eq:material-objectivity} and hypothesis ~\ref{itm:growth2} are not rank-one convex. We make no claim of originality;  the construction was shown to us anonymously. The result seems to be well known (at least believed) but not written down precisely.
Accordingly, we provide it here: Define
\begin{align*}
    \bm F^\pm := \begin{bmatrix}
        \lambda & 0\\
        0 & \epsilon\mu\\
        0 & \pm\mu\sqrt{1-\epsilon^2}
    \end{bmatrix},
\end{align*}
where $\lambda,\mu>0$, and $\epsilon>0$ is sufficiently small. Then
\begin{align*}
    \bm F^+-\bm F^- = \begin{bmatrix}
        0 & 0\\
        0 & 0\\
        0 & 2\mu\sqrt{1-\epsilon^2}
    \end{bmatrix}\text{ and }\overline{\bm F}:=\frac{1}{2}(\bm F^++\bm F^-)=\begin{bmatrix}
        \lambda & 0\\
        0 & \epsilon\mu\\
        0 & 0
    \end{bmatrix}.
\end{align*}
Note that $\bm F^+$ and $\bm F^-$ are rank-one connected. Objectivity~\eqref{eq:material-objectivity} implies $W(\bm F)=\Psi(\bm C)$, where $\Psi$ is a function of the right Cauchy-Green tensor $\bm C=\bm F^T\bm F$. Simple computations yield 
\begin{align*}
    \tilde{\bm C}:=\bm C^+=\bm C^-=
    \begin{bmatrix}
        \lambda^2 & 0\\
        0 & \mu^2
    \end{bmatrix},\text{ and }\overline{\bm C}=\begin{bmatrix}
        \lambda^2 & 0\\
        0 & \epsilon^2\mu^2
    \end{bmatrix}, 
\end{align*}
where $\bm C^\pm$ correspond to $\bm F^\pm$, respectively, and $\overline{\bm C}$ to $\overline{\bm F}$. Thus, $\Psi(\tilde{\bm C})=W(\bm F^+)=W(\bm F^-)$ and $\Psi(\overline{\bm C})=W(\overline{\bm F})$. Clearly, $\det{\tilde{\bm C}}=(\lambda\mu)^2$ and $\det\bar{\bm C}=(\epsilon\lambda\mu)^2$. From~\ref{itm:growth2} we have $W\rightarrow +\infty$ as $J\rightarrow0^+,$ where $J^2=\det\bm C$. Thus, $\Psi(\bar{\bm C})>\Psi(\tilde{\bm C})$ for sufficiently small $\epsilon$. But this is the same as
\begin{align*}
    W\left(\frac{1}{2}[\bm F^+ +\bm F^-]\right)>\frac{1}{2}[W(\bm F^+)+W(\bm F^-)],
\end{align*}
where $\bm F^+$ and $\bm F^-$ are rank-one connected.\qed

\section*{Acknowledgments}
This work was supported in part by the National Science Foundation through grant DMS-2006586, which is gratefully acknowledged. GGN thanks the Hausdorff Research Institute for Mathematics in Bonn for its hospitality during the Trimester Program, \textit{Mathematics for Complex Materials} funded by the Deutsche Forschungsgemeinschaft (DFG, German Research Foundation) under Germany's Excellence Strategy -- EXC-2047/1 -- 390685813.

\bibliographystyle{spmsci.bst}
\bibliography{ref}
\end{document}